\DeclareMathOperator{\Pic}{\mathrm{Pic}}
\newtheorem{conjecture}[equation]{Conjecture} 
\newtheorem{prop}[equation]{Proposition} 
\newtheorem{theorem}[equation]{Theorem}
\newtheorem{lemma}[equation]{Lemma}
\newtheorem{corollary}[equation]{Corollary}
\theoremstyle{remark}
\newtheorem{remark}[equation]{Remark}
\begin{document}
\begin{abstract}
 Serrrano's Conjecture says that if $L$ is a strictly nef line bundle on a smooth projective variety $X$, then $K_X+tL$ is ample for all $ t > \dim X+1$. For nonuniruled varieties, it is a special case of the generalized abundance conjecture of Lazic and Peternell \cite{LP}. In this short note, we show that generalized abundance holds for smooth Fano fibrations over varieties of general type. I also prove a generalized version of this conjecture (due to Campana, Chen and Peternell) for surfaces. We conclude this paper by showing that a conjecture of Segre, Harbourne, Gimigliano and Hirschowitz on the expected dimension of linear systems on blow-ups of $\mathbb{P}^2$ implies the existence of strictly nef non ample divisors on rational surfaces
\end{abstract}

\title[Strictly nef divisors]{Strictly nef divisors and some remarks on a conjecture of Serrano}
\author{Priyankur Chaudhuri}
\address{Department of Mathematics, University of Maryland, College Park, MD 20742}
\email{pchaudhu@umd.edu}

\maketitle

\section{Introduction}
A Cartier divisor $D$ on a projective variety $X$ is called \emph{strictly nef} if $D\cdot C>0$ for all effective curves $C \subset X$. Early on, there were only two known examples of such divisors which are not ample: one due to Mumford of a divisor on a ruled surface which has Iitaka dimenion $-\infty$ and one due to Ramanujam of a divisor on a threefold which is  nef and big. Afterwards, Mumford's example was generalized to higher dimensions by Subramanian (\cite{S}) to construct such divisors on projective bundles of arbitrary rank over curves. Later on, Mehta and Subramanian (\cite{MS}) constructed such examples in finite characteristic.
\\

If $X$ is a projective variety of dimension $d$ and $D$ is any strictly nef divisor on $X$, then the Cone Theorem shows that the adjoint divisor $K_X+tD$ is again strictly nef for all $ t> d+1$ if $X$ is smooth  and for all $ t> 2d$ if $X$ has log terminal singularities (see Lemma 6). It is natural to wonder about the ampleness of these adjoint divisors. In \cite{Ser}, Serrano conjectures that in fact:\\

\begin{conjecture}If $X$ is a smooth projective d-dimensional variety and $D \in \Pic X$ is strictly nef, then $K_X+tD$ is ample for all $ t >d+1$.
\end{conjecture}

Again the Cone Theorem shows that Serrano's Conjecture is equivalent to the assertion that $K_X^{\perp} \cap D^{\perp} = 0$ in $\overline{NE}(X)$.\\

Note that if a strictly nef divisor $L$ is semiample, then it is in fact ample. Indeed, replacing $L$ by a positive tensor power, we may assume that $L$ is globally generated. Let $\phi: X \rightarrow Y$ be the morphism given by the linear system $|L|$. Then $L=\phi^*(A)$ for some ample $A \in \Pic Y$. If $\phi(C)=pt$ for some curve $C \subset X$, then $L|_C=\phi^*(A)|_C =\mathcal{O}_C$ which can not be since $L$ is strictly nef. Thus $\phi$ is finite and $L$, being a finite pullback of an ample divisor is also ample.\\

 Serrano proves his conjecture for Gorenstein surfaces and for most smooth threefolds. His ideas use specific results about classification of surfaces and threefold extremal contractions.  This is followed by Campana, Chen and Peternell \cite{CCP} who prove the conjecture for varieties $X$ with Kodaira dimension atleast $d-2$. The general case seems to be hard and related to the main conjectures of the Minimal Model Program. In fact, for non uniruled varieties, it is a special case of the Generalized Abundance Conjecture (see \cite[page 2]{LP}).
\begin{conjecture}(Generalized Abundance) Let $(X, \Delta)$ be a klt pair with $K_X+\Delta$ pseudoeffective and $L$ a nef Cartier divisor on $X$ such that $K_X+\Delta +L$ is also nef. Then there exists a semiample $\mathbb{Q}$-divisor $M$ on $X$ such that $K_X+\Delta+L \equiv M$.
\end{conjecture}

While this conjecture is expected to fail for uniruled varieties (see Remark 14), the following result produces a large class of uniruled varieties on which generalized abundance holds:

\begin{theorem} Let $X$ be a smooth projective variety admitting a surjective morphism $X \xrightarrow{\phi} Y$ with connected fibers such that 
$-K_X$ is ample on a general fiber of $\phi$ and $Y$ is of general type. Suppose that $L, K_X+L \in \Pic X$ are both nef. Then $K_X+L$ is semiample.
\end{theorem}

In the situation of the above theorem, it is interesting to ask how far one can relax the restriction on $\omega_Y$ being big. For example, can we say anything interesting about $K_X+L$ if $\kappa(Y) \geq 1$?\\

One interesting question about moving strictly nef divisors themselves is: how far are they from being big? While we do not know the answer in general, we show that if $\kappa(L) \geq \dim(X)-2$, then $L$ is big (see Proposition 17). In the fourth section, I prove the surface case of a conjecture of Campana, Chen and Peternell (\cite[Conjecture 2.2]{CCP}), which is a generalization of Serrano's conjecture. Here we briefly recall the result. A line bundle $L \in \Pic X$ is called \emph{almost strictly nef} if there is a birational morphism $ \pi:X \rightarrow Y$ and $M \in \Pic Y$ strictly nef such that $L = \pi^*(M)$.

\begin{theorem} Let $S$ be a smooth projective surface and $ L \in \Pic S$ almost strictly nef. Then $K_S+tL$ is big for all $t>3$.
\end{theorem}
More generally, we show that this even holds for surfaces with Gorenstein singularities (see Corollary 23). As a corollary, we show that this conjecture holds for varieties $X$ with $\kappa(X) \geq \dim(X)-2$ (see Corollary 25). In the fourth section, using a conjecture of Segre, Harbourne, Gimigliano and Hirschowitz (called the SHGH conjecture in short; see \cite[Conjecture 3.1]{Han}) on the expected dimension of ceratin linear systems on blow-ups of $\mathbb{P}^2$, we produce a series of examples of strictly nef non-ample divisors on surfaces of arbitrary Kodaira dimension. More precisely, we show:\\
\begin{theorem}Suppose SHGH conjecture holds. Then for any projective surface $S$, there exists a surface $S^{'}$ birational to $S$ admitting a strictly nef non ample divisor.
\end {theorem}

\subsection{The results we will be using}

The following lemmas are used in the whole paper. The first one is due to Serrano and the second is a modification of one of his results.

\begin{lemma}(\cite[Lemma 1.1]{Ser}) Let $X$ be a projective variety of dimension $d$ and $L\in \Pic X$ strictly nef . Then $K_X+tL$ is strictly nef  for all $ t >d+1$ if $X$ is smooth and  for all $ t>2d$ if $X$ has klt singularities.
\end{lemma}

\begin{proof}
 For the smooth case, see \cite[Lemma 1.1]{Ser}. If $X$ has klt singularities, it follows from the Cone Theorem that $K_X+2dL$ is nef. Thus $K_X+tL$ is strictly nef for all $ t>2d$.
\end{proof}

The following result gives a numerical criterion for generalized abundance to fail. In view of the above lemma, it generalizes \cite[Lemma 1.3]{Ser}.
\begin{lemma}
Let $X$ be a projective variety with at worst klt singularities. Suppose that $L, K_X+L \in \Pic X $ are both nef. If $K_X+L$ is not semiample, then $K_X^d=K_X^{d-1}\cdot L=\dots=L^d=0$ where $d=\dim X$.
\end{lemma}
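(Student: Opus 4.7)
The plan is to apply the Kawamata--Shokurov basepoint-free theorem to $D := K_X+L$ and use the assumed failure of semiampleness to produce a polynomial identity that forces every mixed intersection number to vanish.

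For each integer $a \geq 1$, the divisor
\[
aD - K_X = (a-1)(K_X+L) + L
\]
is a non-negative integer combination of the nef Cartier divisors $D$ and $L$, hence is itself nef Cartier. Were it also big for some $a \geq 1$, the basepoint-free theorem for klt varieties (with boundary $\Delta = 0$) would yield that $|mD|$ is basepoint-free for $m \gg 0$, making $D = K_X+L$ semiample and contradicting the hypothesis. Hence $(a-1)D + L$ is nef but not big for every integer $a \geq 1$. Since bigness of a nef Cartier divisor is equivalent to positivity of its top self-intersection, this gives
\[
0 \;=\; \bigl((a-1)D + L\bigr)^d \;=\; \sum_{k=0}^d \binom{d}{k}(a-1)^k \, D^k \cdot L^{d-k}
\]
for every integer $a \geq 1$.

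The right-hand side is a polynomial in $a$ vanishing on infinitely many integer values, so each coefficient must be zero: $D^k \cdot L^{d-k} = 0$ for $0 \leq k \leq d$. Writing $K_X = D - L$ and expanding by multilinearity converts these identities into $K_X^j \cdot L^{d-j} = 0$ for every $0 \leq j \leq d$, which is the desired conclusion.

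The one delicate ingredient is the invocation of the Kawamata--Shokurov theorem in the klt setting (needed to accommodate the hypothesized singularities, and slightly strengthening Serrano's original Lemma 1.3 proved in the smooth case); the rest of the argument is linear algebra on the nef cone together with the standard fact that for a nef divisor the volume coincides with the top self-intersection.
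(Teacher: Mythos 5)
Your proposal is correct and follows essentially the same route as the paper: both apply the Kawamata--Shokurov basepoint-free theorem to $D=K_X+L$ via the nef divisor $aD-K_X=(a-1)D+L$, conclude that its top self-intersection vanishes, and then expand to kill all mixed terms. The only cosmetic difference is that the paper takes $a=2$ once and uses nonnegativity of intersection numbers of nef divisors to see that each term $\binom{d}{k}D^k\cdot L^{d-k}$ in the expansion vanishes, whereas you let $a$ vary and invoke the vanishing of a polynomial at infinitely many integers; both steps are valid.
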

\begin{proof}
 If $(K_X+2L)^d>0$, then $K_X+2L=2(K_X+L)-K_X$ is nef and big and thus $K_X+L$ is semiample by the basepoint free theorem (see \cite[Theorem 3.3]{KM}) which contradicts our assumption. Thus $(K_X+2L)^d=0$. Since $K_X+L , L$ are both nef, this implies $  (K_X+L)^i\cdot L^{d-i} =0$ for all $i$ and from this, we deduce that $K_X^i\cdot L^{d-i}=0$ for all $i$ .
\end{proof}

\section{Strictly nef divisors and Serrano's Conjecture}

\begin{lemma}Let $X$ be a smooth projective variety of dimension n, and let $L \in \Pic X $ be strictly nef. Assume that $|aK_X+bL|$ contains a smooth divisor $D$ for some $a,b \in \mathbb{Z}$ and that Serrano's conjecture holds for $D$. Then $K_X+tL$ is ample for all $ t > n+1$.
\begin{proof}
Let $D \in |aK_X+bL|$ be smooth. Then $K_D+tL|_D$ is ample for all $ t>n$ by assumption. This implies that
\begin{center}
$L|_D\cdot (K_D+tL|_D)^{n-2} =L\cdot D\cdot (K_X+D+tL)^{n-2}>0$ 
\end{center}
and hence
\begin{center}
$L\cdot (aK_X+bL)\cdot (K_X+aK_X+(b+t)L)^{n-2}>0$
\end{center}
for all $ t>n$. Now we can appeal to Lemma 3. 
\end{proof}
\end{lemma}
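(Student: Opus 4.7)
The plan is to argue by contradiction, combining Lemma 3 applied on $X$ with Serrano's conjecture on $D$. Suppose $K_X + t_0 L$ fails to be ample for some $t_0 > n+1$. By Lemma 2 it is still strictly nef, and any strictly nef divisor that is also semiample must be ample (the associated semiample morphism cannot contract a curve, so it is finite, and the pullback of an ample divisor by a finite morphism is ample); hence $K_X + t_0 L$ cannot be semiample either. Lemma 3, applied to the nef pair $(t_0 L,\, K_X + t_0 L)$, then forces
\[
K_X^i \cdot L^{n-i} = 0 \qquad \text{for every } 0 \le i \le n.
\]

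On the intersection-theoretic side, Serrano's conjecture on the smooth $(n-1)$-dimensional variety $D$ gives that $K_D + t L|_D$ is ample for every $t > n$. Since any irreducible curve in $D$ is also an irreducible curve in $X$, the restriction $L|_D$ is itself strictly nef; together with the ampleness of $H := K_D + t L|_D$ this yields
\[
L|_D \cdot H^{n-2} > 0,
\]
because $H^{n-2}$ is represented by a positive multiple of an effective $1$-cycle (a complete intersection of general sections of $mH$ for $m \gg 0$), and $L|_D$ is positive on each of its components. I would then apply adjunction $K_D = (K_X + D)|_D$ together with $D \sim aK_X + bL$ to rewrite the left-hand side as
\[
L \cdot (aK_X + bL) \cdot \bigl((a+1)K_X + (b+t)L\bigr)^{n-2}.
\]
Expanding this product, every monomial is of the form $c\, K_X^i \cdot L^{n-i}$ with $0 \le i \le n-1$, and each such term vanishes by the first display. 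This contradicts the strict positivity, finishing the argument.

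The main obstacle I anticipate lies not in the computation but at the two interfaces. Invoking Lemma 3 cleanly requires the easy but non-formal remark that a strictly nef semiample divisor is automatically ample; converting the ampleness of $K_D + t L|_D$ into a \emph{strict} intersection with $L|_D$ relies on $L|_D$ being numerically nontrivial, which in turn follows from the strict nefness of $L$ on $X$ descending to $D$. Neither step is deep, but both are essential in order to translate Lemma 3 into a usable contradiction with the intersection number produced on the divisor $D$.
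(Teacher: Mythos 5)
Your proposal is correct and is essentially the paper's own argument run in contrapositive form: both derive $L|_D\cdot(K_D+tL|_D)^{n-2}>0$ from Serrano's conjecture on $D$, rewrite it via adjunction as $L\cdot(aK_X+bL)\cdot((a+1)K_X+(b+t)L)^{n-2}>0$, and play this off against Lemma 3. The two bridging observations you flag (strictly nef plus semiample implies ample, and strict nefness of $L|_D$) are exactly the steps the paper leaves implicit in ``appeal to Lemma 3,'' so you have simply made the same proof explicit.
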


\begin{prop} Let $X$ be a smooth projective variety with $n=\dim X \geq 4$ and let $L \in \Pic X$ be strictly nef. Assume that the linear system $ |aK_X+bL|$ is basepoint free with $\kappa (|aK_X+bL|) \geq n-3$ for some $a>0$. Then $K_X+tL$ is ample for all $ t> n+1$.

\begin{proof}
 For the sake of illustration, let us consider a few lower dimensional cases. Suppose $n=4$, $\kappa(|aK_X+bL|) \geq 1$. Let $D \in |aK_X+bL|$ be smooth. Then \begin{center}
$aK_D+bL|_D = a(K_X+D)|_D+bL|_D$
\end{center}
\begin{center}
$=(aK_X+bL)|_D+a(aK_X+bL)|_D$ 
\end{center}
is globally generated. This $ \cong \mathcal{O}_D $ if $\kappa(|aK_X+bL|)=1$. Thus $K_D \equiv _{\mathbb{Q}} \pm L|_D$. Thus $\pm K_D$ is strictly nef, hence ample (by abundance for 3-folds and \cite[Theorem 3.9]{Ser} respectively) and Serrano's Conjecture holds for $D$. If $\kappa(|aK_X+bL|)>1$, then $\kappa(|aK_D+bL|_D|) \geq 1$ and Serrano's Conjecture holds on $D$ by \cite[Proposition 3.1]{Ser}. Now we are done for the case $n=4$ by the above Lemma. \\
 
 Now suppose $n=5$, $\kappa(|aK_X+bL|) \geq 2$, $D \in |aK_X+bL|)$ smooth. Then $|aK_D+bL|_D|$ is basepoint free and \begin{equation}
 \label{5}    
|aK_D+bL|_D| \supset |aK_X+bL||_D + |a(aK_X+bL)||_D \end{equation} and both the pieces have Iitaka dimension  $\geq 1$. Thus $\kappa(|aK_D+bL|_D|) \geq 1$ and we are reduced to the above case.\\

 The general case proceeds by induction as in the above cases once we note that $\kappa (|aK_D+bL|_D|) \geq \kappa (|aK_X+bL|) -1$ for all $ D \in |aK_X+bL|$ smooth by (\ref{5}).

\end{proof}
\end{prop}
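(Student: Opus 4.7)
The plan is to induct on $n = \dim X$, using Lemma 4 as the reduction tool. Since $|aK_X+bL|$ is basepoint free and $a > 0$, Bertini supplies a smooth member $D \in |aK_X+bL|$, and by Lemma 4 it suffices to verify Serrano's conjecture on $D$. Adjunction yields
\[ aK_D + bL|_D \;=\; (a+1)(aK_X + bL)\big|_D, \]
so the system $|aK_D + bL|_D|$ is basepoint free on $D$ and has the same Iitaka dimension as $(aK_X+bL)|_D$.

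Next I would show that this Iitaka dimension drops by at most one under restriction to a general $D$. Using the morphism $\varphi \colon X \to Y$ defined by the (complete, basepoint-free) system $|aK_X+bL|$, a general $D$ is the pullback of a hyperplane section of $Y$, whence $\varphi(D)$ has dimension $\geq \kappa(aK_X+bL) - 1$. Therefore
\[ \kappa\bigl((aK_X+bL)|_D\bigr) \;\geq\; \kappa(aK_X+bL) - 1 \;\geq\; (n-1) - 3, \]
which is exactly the hypothesis of the proposition applied to $D$, a smooth variety of dimension $n-1$. For $n \geq 5$ the inductive hypothesis now closes the step.

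The base case is $n = 4$, where $D$ is a smooth threefold. I would split on the value of $\kappa := \kappa(aK_X+bL)$. If $\kappa \geq 2$, then $\kappa\bigl((aK_X+bL)|_D\bigr) \geq 1$ and Proposition 3.1 of \cite{Ser} delivers Serrano's conjecture on $D$ directly. If $\kappa = 1$, then $(aK_X+bL)|_D \equiv 0$ on any general $D$ (supported on fibres of the Iitaka fibration $X \to C$), so adjunction forces $K_D \equiv_{\mathbb{Q}} -(b/a)L|_D$; since $L|_D$ is strictly nef, $\pm K_D$ is strictly nef on the threefold $D$, hence ample by abundance in dimension three (canonical case) or by Theorem 3.9 of \cite{Ser} (Fano case), and Serrano's conjecture on $D$ is trivial.

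The main technical point, and the only real obstacle, is the drop-by-one estimate for Iitaka dimensions under restriction to a general divisor. This is precisely where basepoint-freeness is essential: restriction to a special divisor can collapse $\kappa$ by more than one, whereas a general member of a basepoint-free system behaves like a pullback from the image of the associated morphism, giving the clean $\kappa - 1$ lower bound. Once this bookkeeping is secured, the induction unwinds straightforwardly down to Serrano's threefold results.
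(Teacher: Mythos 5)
Your proposal is correct and follows essentially the same route as the paper: reduce to $D \in |aK_X+bL|$ via Lemma 4, use the adjunction identity $aK_D+bL|_D=(a+1)(aK_X+bL)|_D$ together with the drop-by-one bound $\kappa((aK_X+bL)|_D)\geq \kappa(aK_X+bL)-1$ to induct, and settle the threefold base case by splitting on $\kappa$, invoking Serrano's Proposition 3.1 when $\kappa\geq 2$ and abundance / Serrano's Theorem 3.9 when $\kappa=1$ forces $K_D\equiv_{\mathbb{Q}}\pm L|_D$. The only cosmetic difference is that you justify the drop-by-one estimate via the image of the morphism defined by the basepoint-free system, while the paper deduces it from the containment of restricted linear systems; these are interchangeable.
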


\begin{prop} Serrano's conjecture for Calabi-Yau varieties implies Serrano's conjecture for varieties $X$ with $\pm K_X$ semiample.

\begin{proof} Suppose first that $\kappa(\pm K_X) \geq 1$ and $L \in \Pic X$ is strictly nef. Let 
\begin{center}
$\phi=\phi_{|\pm mK_X|} :X \rightarrow Y$
\end{center}
be the Kodaira fibration, so $\pm mK_X =\phi^*(A)$ for some $A \in \Pic Y$ very ample. Let $F \subset X$ be a general fiber of $\phi$. Then 
\begin{center}
$\pm m K_F= \pm mK_X|_F=(\phi^* A)|_F =\mathcal{O}_F$.
\end{center}
Let $f: \hat{F} \rightarrow F$ be an associated unramified cyclic cover (see \cite[Definition 2.49]{KM}). Then we have 
\begin{center}
$f^*(K_F) \cong \mathcal{O}_{\hat{F}} \cong K_{\hat{F}}$.
\end{center}
If $\hat{L}:=f^*(L|_F)$, then $\hat{L}= f^*(K_F+L|_F)$ is ample by assumption. Thus $K_F+L|_F$ is also ample, ie $K_X+L$ is $\phi$-ample. Thus for all $ N \gg 0$, 
\begin{center}
$K_X+L+\phi^*(NA)=(1 \pm mN)K_X+L$
\end{center}
is ample. Hence $((1 \pm mN)K_X+L)^n >0$ and we are done by Lemma 3.\\

If $\kappa(K_X)=0$, then the conclusion follows by using an unramified cyclic cover as above.

\end{proof}
\end{prop}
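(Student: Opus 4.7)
The plan is to reduce Serrano's conjecture on $X$ to Serrano's conjecture on a Calabi--Yau variety obtained as an unramified cyclic cover of a general fiber of the Iitaka fibration, and then to use Lemma 3 as a semiampleness criterion. Since Lemma 2 already guarantees that $K_X + tL$ is strict nef for $t > \dim X + 1$, it is enough to promote strict nefness to ampleness, and by the contrapositive of Lemma 3 applied to the nef pair $K_X,\, tL$ it suffices to exhibit a single nonzero intersection number of the form $K_X^{\,i}\cdot L^{\,n-i}$ in order to force $K_X + tL$ to be semiample.

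I would handle first the case $\kappa(\pm K_X) \geq 1$. Let $\phi : X \to Y$ be the associated Iitaka fibration, so that $\pm m K_X = \phi^{*} A$ for some ample line bundle $A$ on $Y$, and let $F$ be a general smooth fiber. Adjunction gives $\pm K_F = \pm K_X|_F$ which is torsion in $\Pic F$ since $\phi^{*} A|_F = 0$. Passing to the associated unramified cyclic cover $f: \hat F \to F$ produced by this torsion class yields $K_{\hat F} \cong \mathcal{O}_{\hat F}$, so $\hat F$ is Calabi--Yau. The pulled-back bundle $\hat L := f^{*}(L|_F)$ is strict nef because $f$ is finite and surjective, so the hypothesis applied on $\hat F$ forces $\hat L$ to be ample; ampleness then descends along the finite map $f$ to give $L|_F$ ample. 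Hence $K_X + L$ is $\phi$-ample, and for $N \gg 0$ the divisor $(1 \pm mN) K_X + L = (K_X + L) + \phi^{*}(NA)$ is ample on $X$, in particular has positive top self-intersection. Expanding this self-intersection binomially must then produce at least one nonzero $K_X^{\,i}\cdot L^{\,n-i}$, which by the reduction above closes the case.

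The remaining case $\kappa(\pm K_X) = 0$ with $\pm K_X$ semiample means $\pm m K_X \cong \mathcal{O}_X$ itself, so the cyclic cover construction applies directly to $X$: one builds $f : \hat X \to X$ étale with $K_{\hat X} \cong \mathcal{O}_{\hat X}$, applies Serrano's conjecture to the Calabi--Yau $\hat X$ with $f^{*} L$, and descends ampleness back down to $L$ on $X$.

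The main obstacle I anticipate is not any single deep step but rather the transition from fiberwise positivity on $F$ to global positivity on $X$: one must confirm that ampleness really descends through the finite cyclic cover, that pullback by $f$ preserves strict nefness, and that adding a large multiple of the $\phi$-nef class $\phi^{*} A$ pushes a $\phi$-ample class into the ample cone of $X$. Once these standard checks are in place, Lemma 2 and Lemma 3 do all the remaining work.
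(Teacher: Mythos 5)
Your proposal is correct and follows essentially the same route as the paper: reduce to a general fiber of the Iitaka fibration of $\pm mK_X$, pass to an unramified cyclic cover to obtain a Calabi--Yau on which the hypothesis applies, descend ampleness of $L|_F$ along the finite cover to get $\phi$-ampleness of $K_X+L$, add $\phi^*(NA)$ to get a globally ample class with positive top self-intersection, and conclude via Lemma 3. Your write-up is in fact slightly more explicit than the paper's about why $\hat L$ is strict nef and why ampleness descends through the finite map.
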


We have the following amusing proposition:

\begin{prop}
 Let $X = BL_{p}(X')\xrightarrow{\pi}  X'$ be a blow-up  of a smooth projective variety $X^{'}$ with $K_{X^{'}}$ pseudoeffective. Let $L \in \Pic X$ be strictly nef. Then $K_X+tL$ is ample for all $t> n+1$.
\end{prop}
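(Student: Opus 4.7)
The plan is to reduce ampleness of $K_X+tL$ to a single positivity check via the semi-ampleness criterion of Lemma~3. By Lemma~2, $K_X+tL$ is strictly nef for every $t>n+1$, so both $tL$ and $K_X+tL$ are nef. If $K_X+tL$ is semi-ample, it is automatically ample: strict nefness forces the associated morphism to be finite, so the pullback of an ample class on the image is ample. If on the other hand $K_X+tL$ is not semi-ample, Lemma~3 forces every intersection number $K_X^{i}\cdot L^{n-i}$ to vanish. Thus it suffices to exhibit a single positive intersection number, and I will aim for $K_X\cdot L^{n-1}>0$.

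Using the standard formula $K_X=\pi^{*}K_{X'}+(n-1)E$, with $E\cong\mathbb{P}^{n-1}$ the exceptional divisor, this splits as
\[
K_X\cdot L^{n-1}=\pi^{*}K_{X'}\cdot L^{n-1}+(n-1)\,E\cdot L^{n-1}.
\]
The exceptional term is strictly positive: since $\Pic(\mathbb{P}^{n-1})=\mathbb{Z}$, the restriction $L|_{E}$ is $\mathcal{O}(d)$ for some $d\in\mathbb{Z}$, and strict nefness of $L$ applied to a line $\ell\subset E$ forces $d\geq 1$, whence $E\cdot L^{n-1}=(L|_{E})^{n-1}=d^{n-1}>0$. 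For the pullback term I would approximate $K_{X'}$ by effective $\mathbb{R}$-divisors $D_i'$ (possible since $K_{X'}$ is pseudo-effective), pull back to effective $\mathbb{R}$-divisors $\pi^{*}D_i'$ on $X$, and observe that for each irreducible component $C$ of $\pi^{*}D_i'$ the number $C\cdot L^{n-1}=(L|_{C})^{n-1}$ is the top self-intersection of a nef line bundle on an $(n-1)$-dimensional projective subscheme, hence nonnegative. Passing to the limit yields $\pi^{*}K_{X'}\cdot L^{n-1}\geq 0$, and the required inequality $K_X\cdot L^{n-1}>0$ follows.

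The main obstacle is this nonnegativity of $\pi^{*}K_{X'}\cdot L^{n-1}$. Without bigness of $L$ it cannot be deduced from any single effective equality, so one must either run the limit approximation sketched above or, equivalently, appeal to Boucksom--Demailly--Paun--Peternell duality: $L^{n-1}$ lies in the closure of the movable cone of curves on $X$ and therefore pairs nonnegatively with the pseudo-effective class $\pi^{*}K_{X'}$. Once $K_X\cdot L^{n-1}>0$ is in hand, the contradiction with Lemma~3 is immediate and the proposition follows.
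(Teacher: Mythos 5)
Your argument is correct, but it takes a genuinely different route from the paper's own proof of this proposition. The paper writes $L=\pi^{*}L'-bE$ with $b>0$ and $L'$ strictly nef, and splits into cases: if $L^{n}>0$ one concludes directly, while if $L^{n}=0$ then $L'^{n}=b^{n}>0$, so $K_{X'}+\epsilon L'$ is big (pseudoeffective plus big); pulling back and adding the effective divisor $(n-1-b\epsilon)E$ makes $K_{X}+\epsilon L$ big, hence $K_{X}+tL$ is big and strictly nef and Lemma~3 finishes. You instead go straight for the single intersection number
\[
K_{X}\cdot L^{n-1}=\pi^{*}K_{X'}\cdot L^{n-1}+(n-1)\,E\cdot L^{n-1},
\]
obtaining strict positivity of the exceptional term from $L|_{E}=\mathcal{O}(d)$ with $d\geq 1$ (which is where strict nefness enters), and nonnegativity of the other term by pairing the pseudoeffective class $\pi^{*}K_{X'}$ against the nef product $L^{n-1}$, either by approximating by effective classes or by BDPP duality; both justifications are sound. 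This is in fact essentially the computation the paper uses a page later to prove the stronger corollary that $K_{X}+L$ is semiample whenever $L$ and $K_{X}+L$ are merely both nef (of which the present proposition is a special case, with Remark~10 replacing your line-in-$E$ argument), so your proof is not only valid but arguably cleaner: it avoids the case split on $L^{n}$ and the detour through bigness of $L'$. Both routes rest on the same two pillars --- positivity of $L$ along $E$ and Lemma~3 to convert one nonvanishing intersection number into semiampleness --- and your closing observation that semiample plus strictly nef implies ample is exactly the step the paper leaves implicit.
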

\begin{proof}
 There exists $L^{'} \in \Pic X$ such that \begin{equation} \label{1}
L= \pi ^{*}(L^{'})-bE.
\end{equation}Moreover $b>0$: intersect both sides of (\ref{1}) with an $E$-negative curve. $L'$ is also strictly nef: If $C' \subset X'$ is a curve, then   
\begin{center}
$L'\cdot C' =\pi^*(L')\cdot \pi^*(C')$
\end{center}
\begin{center}
   $= \pi^{*}(L')\cdot ( \Tilde{C'}+mD) =\pi^{*}(L')\cdot \Tilde{C'}>0 $ 
\end{center}
where $ \Tilde{C'}$ is the proper transform of $C$, $D$ is an exceptional curve and $m$ is a non negative integer. If $L^n>0$, then $K_X+tL$ is ample for $t>n+1$ by Lemma 2. If $L^n=0$, then $ L^{'n}>0$ (by (\ref{1}) above). Thus as $K_{X^{'}}$ is pseudoeffective, $K_{X^{'}}+\epsilon L{'}$ is big for all positive $\epsilon$. In particular, 
\begin{center}
$K_X+\epsilon L = \pi^*(K_{X^{'}}+ \epsilon L^{'})+(n-1-b \epsilon)L $,
\end{center} 
being a sum of a big and a nef divisor, is big for all $\epsilon < (n-1)/b$. Thus $K_X+ (t+\epsilon)L $ is big and strictly nef  for all $ t>n+1$ and we are done by Lemma 3.
\end{proof}

 The following simple remark will be used several times in the sequel:\\

 \begin{remark}If $L \in \Pic X$ is nef such that $K_X+L$ is also nef (this happens for example if $L$ is a large multiple of a strictly nef divisor on a smooth projective variety) and $\phi:X \xrightarrow[]{}  Y$ an extremal contraction with some fiber $F$, then $L|_{F}$ is ample: We know that $-K_{X}|_{F}$ is ample and $(K_X+L)|_{F}$ is nef, thus $L|_{F}$ is ample.
 \end{remark}
 
 This allows us to generalize Proposition 11:

\begin{corollary}
Let $X=Bl_p(X^{'}) \xrightarrow{\pi} X^{'}$ be the blow-up of a smooth projective variety $X^{'}$ at a point $p$. Assume that $K_{X^{'}}$ is pseudoeffective and suppose that $L, K_{X}+L \in \Pic X$ are both nef. Then $K_{X}+L$ is semiample.
\end{corollary}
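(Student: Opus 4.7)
The plan is to argue by contradiction using Lemma 3. First, I write $L = \pi^*(L') - bE$ for a unique $L' \in \Pic(X')$ and $b \in \mathbb{Z}$, recalling $K_X = \pi^*K_{X'} + (n-1)E$. The blow-down $\pi$ is an extremal contraction with fiber $E \cong \mathbb{P}^{n-1}$ and $-K_X|_E = (n-1)H$ ample (where $H$ is the hyperplane class on $E$), so Remark 7 applies and forces $L|_E = bH$ to be ample; hence $b \geq 1$ (and $n \geq 2$). The same reasoning as in the proof of Proposition 8, applied via the projection formula to an arbitrary curve $C' \subset X'$ and its strict transform $\tilde{C'}$, gives $L' \cdot C' = L \cdot \tilde{C'} + b(E \cdot \tilde{C'}) \geq 0$, so $L'$ is nef on $X'$.

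Now suppose for contradiction that $K_X + L$ is not semiample. Since $L$ and $K_X + L$ are both nef and $X$ is smooth, Lemma 3 gives $K_X^{a}\cdot L^{n-a} = 0$ for every $0 \leq a \leq n$. I would translate each of these into an intersection number on $X'$: using $\pi^*(\alpha) \cdot E^{k} = 0$ whenever $\alpha \in H^{*}(X')$ is not a top class and $1 \leq k < n$, together with $E^n = (-1)^{n-1}$, only the pure $\pi^*$ and pure $E^{n}$ terms survive in the binomial expansion of $(\pi^*K_{X'} + (n-1)E)^{a}(\pi^*L' - bE)^{n-a}$, yielding
\begin{equation*}
0 \;=\; K_X^{a}\cdot L^{n-a} \;=\; K_{X'}^{a}\cdot L'^{n-a} + (-1)^{a+1}(n-1)^{a} b^{n-a}.
\end{equation*}
Specializing to $a = 1$ gives $K_{X'}\cdot L'^{n-1} = -(n-1)b^{n-1}$, which is strictly negative since $b \geq 1$ and $n \geq 2$.

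The main step will be to contradict this strict negativity. The claim is that pseudoeffectivity of $K_{X'}$ together with nefness of $L'$ forces $K_{X'}\cdot L'^{n-1} \geq 0$. Indeed, writing $K_{X'}$ as a limit in $N^1(X')_{\mathbb{R}}$ of effective $\mathbb{Q}$-divisors $D_i$, each prime component $C$ of $D_i$ satisfies $L'^{n-1}\cdot C = (L'|_C)^{n-1} \geq 0$, since $L'|_C$ is a nef class on an $(n-1)$-dimensional projective variety; hence $D_i \cdot L'^{n-1} \geq 0$ and so is the limit $K_{X'}\cdot L'^{n-1}$. This contradicts the strict inequality above, so $K_X + L$ must be semiample.
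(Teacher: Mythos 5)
Your proof is correct and follows essentially the same route as the paper: both arguments combine Lemma 3 with the Remark on extremal contractions (giving $L|_E$ ample, i.e.\ $b\geq 1$) and the fact that a pseudoeffective class dotted with a nef $(n-1)$-fold self-intersection is nonnegative. Your push-down computation of $K_X^a\cdot L^{n-a}$ on $X'$ is just a more explicit rendering of the paper's one-line observation that $K_X\cdot L^{n-1}=(\pi^*K_{X'}+(n-1)E)\cdot L^{n-1}>0$, so there is nothing further to add.
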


\begin{proof}
 Let $n=\dim X$. Then
$K_X\cdot L^{n-1} = (\pi^*(K_{X^{'}})+E)\cdot L^{n-1}>0$ by Remark 10 and assumption. Thus we are done by Lemma 3. \\
\end{proof}

\begin{remark}
Since for all $ n \geq 1, H^0(nK_X)=H^0(nK_{X^{'}})$ (by \cite[Example 2.1.16]{Laz}), Corollary 10 proves Generalized Abundance for all smooth projective varieties which are obtained by blowing up points on smooth projective varieties $X$ with $K_X$ $\mathbb{Q}$-effective.
\end{remark}

Though Generalized Abundance does not always hold for uniruled varieties (see Remark 14), in the following situation, it does:

\begin{theorem}
 Suppose that $X$ is a smooth projective variety admitting a surjective morphism $X \xrightarrow[]{\phi} Y$, with connected fibers such that $-K_X$ is ample on a general fiber of $\phi$ (for example, $\phi $ could be a Fano contraction) and $Y$ is of general type. Suppose that $L$ and $K_X+L$ are both nef divisors on $X$. Then $K_X+tL$ is big if $t \gg 0$. In particular, $K_X+L$ is semiample.
\begin{proof} There exists a birational morphism $ \tau: Y^{'} \rightarrow Y$, where $Y^{'}$ is smooth projective such that letting $X^{'}$ denote the desingularization of the main component of $X \times _Y Y^{'}$ and $\phi^{'}: X^{'} \rightarrow Y^{'}$, $\tau^{'}: X^{'} \rightarrow X$ the induced morphisms, any $\phi^{'}$-exceptional divisor is also $\tau{'}$-exceptional. Indeed, one can choose $\tau $ to be a birational flattening of $\phi : X \rightarrow Y $. Let $L^{'} := \tau^{'*}(L) $. Then $L^{'}$ is also ample on a general fiber of $\phi^{'}$. Indeed, letting $U \subset Y$ be a non empty open set contained in the smooth locus of $\phi $ over which $-K_X$ is ample and letting $U^{'}:= \tau^{-1} (U)$, $L^{'}$ is ample over $U^{'}$. \\

Then by \cite [Lemma 4.2]{Ser}, $[\phi^{'}_*(\omega_{X^{'}/Y^{'}} \otimes r L^{'}))^{\otimes s}]^{**} \otimes \omega _{Y^{'}} \otimes G^{\otimes (m+1)}$ is generically spanned for all $ r,s >0$, $G \in \Pic Y^{'}$ very ample where $m = \dim Y$. Now since $L^{'}$ is ample on a general fiber of $\phi^{'}$, the natural map 

\begin{center}
$(\phi^{'}_*(\omega_{X^{'}/Y^{'}} \otimes rL^{'}))^{\otimes s} \rightarrow \phi^{'}_*(s(\omega_{X^{'}/Y^{'}} \otimes rL^{'}))$
\end{center}
is generically surjective if $ r \gg 0$ (by combining Kodaira vanishing with \cite[Corollary 2, page 50]{Mum}). Thus we can conclude that 
\begin{center}
$[\phi^{'}_*(s(\omega_{X^{'}/Y^{'}} \otimes rL^{'})]^{**} \otimes \omega_{Y^{'}} \otimes G^{\otimes (m+1)}$
\end{center}
is also generically spanned for all $r,s >0$. By a theorem of Nakayama (see \cite[Theorem 1.2]{Fu}, \cite[Lemma 5.10, page 107]{Nak}), this equals
\begin{center}
$ \phi^{'}_*(s(\omega_{X^{'}/Y^{'}} \otimes rL^{'} \otimes E)) \otimes \omega_{Y^{'}} \otimes G^{\otimes (m+1)}$ 
\end{center}
for some effective $\phi^{'}$-exceptional (hence also $\tau^{'}$-exceptional) divisor $E$. This equals
\begin{center}
$ \phi^{'}_*(s(\omega_{X^{'}} \otimes rL^{'} \otimes E)) \otimes (1-s)\omega_{Y^{'}} \otimes G^{\otimes (m+1)}$.
\end{center}
Now fix $t_0$ sufficiently large such that there exists $D \geq 0$ such that $D \in |t_0K_{Y^{'}}-G|$, so that $ (m+1)(D+G) \in |t_0(m+1)K_{Y^{'}}|$. Then 
\begin{center} 
$\phi^{'}_*(s(\omega_{X^{'}} \otimes rL^{'} \otimes E)) \otimes (1-s) \omega _{Y^{'}} \otimes \mathcal{O}_{Y^{'}}((m+1)(D+G))$
\end{center}
\begin{center}
$ \cong \phi^{'}_*(s(\omega_{X^{'}} \otimes rL^{'} \otimes E)) \otimes (t_0(m+1)+1-s)\omega_{Y^{'}} =: \mathcal{F}$
\end{center}
is also generically spannned. Thus there exists $ 0 \neq \sigma \in H^0(\mathcal{F})$ and $ 0 \neq \tau \in H^0((s-1-t_0(m+1))K_{Y^{'}} \otimes \mathcal {O}_{Y^{'}}(-G))$ for $s \gg 0$ since $K_{Y^{'}}$ is big. Then $ 0 \neq \sigma \otimes \tau  \in H^0(\phi^{'}_*(s(\omega_{X^{'}} \otimes rL^{'} \otimes E)) \otimes \mathcal{O}_{Y^{'}}(-G))$ and thus $ \kappa(s(\omega_{X^{'}} \otimes rL^{'} \otimes E) \otimes \phi^{'*}(-G)) \geq 0$. \\

Now the Easy addition theorem (see \cite[Theorem 3.13, page 51] {Nak}) implies that 
\begin{center}
$\omega_{X^{'}} \otimes rL^{'} \otimes E = \tau^{'*}(\omega_X \otimes rL) \otimes \mathcal{O}_{X^{'}}(E+E^{'})$
\end{center}
 is big for $r \gg 0$ (where $E, E^{'} \geq 0$ are $\tau^{'}$-exceptional divisors). Then 
\begin{center}
$\tau^{'}_*(\tau^{'*}(\omega_X \otimes rL) \otimes \mathcal{O}_{X^{'}}(E+E^{'})) = \omega_X \otimes rL$
\end{center}
is big for $r \gg 0$. Thus, $ (K_X+rL)^n >0$ and by Lemma 3, $K_X+L$ is semiample.
 \end{proof}
\end{theorem}

\begin{remark}
Generalized Abundance can fail without any assumption on $\omega _Y$ as the following example (see \cite [example 1.1]{Sho}) shows :
\end{remark}

Let $C$ be a smooth elliptic curve and let $\mathcal{E}$ be a rank 2 bundle on $C$ given by a non-trivial extension 
\begin{center}
$0 \rightarrow \mathcal{O}_C \xrightarrow{\sigma}\mathcal{E}\rightarrow \mathcal{O}_C \rightarrow 0$
\end{center}
corresponding to a nonzero element $\zeta \in h^1(\mathcal{O}_C)$. Let $X = \mathbb{P}\mathcal {E} \xrightarrow{\pi} C$ be the associated ruled surface, $L= \mathcal{O}_{\mathbb{P}\mathcal{E}}(3)$ and $C_0\subset X$ a section of $\pi$ with $\mathcal{O}_X(C_0)=\mathcal{O}_{\mathbb{P}\mathcal{E}}(1)$. Then, since $K_X= \mathcal{O}_{\mathbb{P}\mathcal{E}}(-2)$, both $L, K_X+L = \mathcal{O}_{\mathbb {P} \mathcal{E}}(1)$ are nef. We will show that $\mathcal{O}_{\mathbb{P}\mathcal{E}}(1)$ is not semiample and that $\kappa (\mathcal {O}_{\mathbb{P}\mathcal{E}}(1))=0$.\\

\begin{proof} We first show that $\mathcal{O}_{\mathbb{P}\mathcal{E}}(1) $ can not be semiample. Suppose it is. Then we can choose $n\gg 0$ such that:

\begin{equation} \label{*} 
 h^0(\mathcal{O}_{\mathbb{P}\mathcal{E}}(n))>2  \end{equation}
 
 and there exists 
\begin{center}
$nC_0 \neq Y \in |\mathcal{O}_{\mathbb{P}\mathcal{E}}(n)|$
\end{center}
which is a smooth elliptic curve. Consider the exact sequence
\begin{equation} \label{11}
0 \rightarrow \mathcal{O}_{\mathbb{P}\mathcal{E}}(n-Y)=\mathcal{O}_{\mathbb{P}\mathcal{E}} \rightarrow \mathcal{O}_{\mathbb{P}\mathcal{E}}(n) \rightarrow \mathcal{O}_Y(n) \rightarrow 0
\end{equation}
on $X$. Now $ C_0^2=0$ implies that $\deg (\mathcal{O}_Y(n)) =0$ and hence $h^0(\mathcal{O}_Y(n)) \leq 1$ which gives $h^0(\mathcal{O}_{\mathbb{P}\mathcal{E}}(n)) \leq 2$ by (\ref{11}) thus contradicting (\ref{*}). This proves that $\mathcal{O}_{\mathbb{P}\mathcal{E}}(1)$ is not semiample .\\
 
 Now we show that $\kappa (\mathcal{O}_{\mathbb{P}\mathcal{E}}(1))=0$. First note that $h^0(\mathcal{E})=1$: this follows from the cohomology exact sequence 
\begin{center}
$0 \rightarrow H^0(\mathcal{O}_C) \rightarrow H^0(\mathcal{E}) \rightarrow H^0(\mathcal{O}_C)\xrightarrow{\phi} H^1(\mathcal{O}_C)$
\end{center}
where the extension class $\phi(1) = \zeta \in H^1(\mathcal{O}_C)$ giving $\mathcal{E}$ being nonzero means that $\phi $ is an isomorphism and thus $h^0(\mathcal{E})=1$. \\
 
 If $\kappa (\mathcal{O}_{\mathbb{P}\mathcal{E}}(1))>0$, choose $n$ to be the smallest integer such that $h^0(\mathcal{O}_{\mathbb{P}\mathcal{E}}(n)) \geq 2$. Thus we have two distinct effective divisors $nC_0, D \in |\mathcal{O}_{\mathbb{P}\mathcal{E}}(n)|$ which can not have any $C_0$ component in common by minimality of $n$. Now $(nC_0\cdot D)=0$ implies that $|\mathcal{O}_{\mathbb{P}\mathcal{E}}(n)|$ is basepoint free, which can not be as we saw above.
\end{proof}

\begin{corollary}
Let $X= Bl_Z(Y) \xrightarrow{\pi} Y$ be the blow-up of a smooth projective variety $Y$ with $\omega _Y \cong \mathcal{O}_Y$ along a smooth subvariety $Z \subset Y$ of general type such that the conormal bundle $F :=\mathcal{N}_{Z/Y}^*$ is pseudoeffective.(i.e. $\mathcal{O}_{\mathbb{P} F}(1)$ is pseudoeffective.) If $L, K_X+L \in \Pic X$ are both nef, then $K_X+L$ is semiample.  
\end{corollary}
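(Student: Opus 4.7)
The plan is to argue by contradiction using Lemma 3, reducing the problem to an application of the bigness half of Theorem 12 to the projective bundle $p\colon E=\mathbb{P}(F)\to Z$, where $E$ is the exceptional divisor. Let $r=\mathrm{codim}_Y(Z)$ and set $\xi=\mathcal{O}_{\mathbb{P}(F)}(1)$, so that $E|_E=-\xi$. Since $\omega_Y\cong\mathcal{O}_Y$, the blowup formula gives $K_X=(r-1)E$; adjunction applied to $Z\subset Y$ gives $K_Z=-c_1(F)$, and adjunction on $E$ yields $K_E=(K_X+E)|_E=rE|_E=-r\xi$.

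Suppose, for contradiction, that $K_X+L$ is not semiample. By Lemma 3, $K_X^i\cdot L^{n-i}=0$ for every $0\le i\le n=\dim X$. Using $K_X=(r-1)E$ and $E|_E=-\xi$, this translates into
\[
\xi^j\cdot (L|_E)^{n-1-j} = 0 \qquad (0\le j\le n-1)
\]
as intersection numbers on $E$. A binomial expansion then gives
\[
(K_E+mL|_E)^{n-1}=\sum_{k=0}^{n-1}\binom{n-1}{k}(-r)^k m^{n-1-k}\,\xi^k(L|_E)^{n-1-k}=0
\]
for every $m$, so $K_E+mL|_E$ is never big on $E$.

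Next I would apply (the proof of) Theorem 12 to the projective bundle $p\colon E\to Z$ with the divisor $L|_E$. The map $p$ is smooth and surjective with connected fibers $\mathbb{P}^{r-1}$; $-K_E=r\xi$ is $p$-ample; and $Z$ is of general type by hypothesis. The divisor $L|_E$ is nef, and it is $p$-ample because, by Remark 10 applied to the $K_X$-negative extremal contraction $\pi\colon X\to Y$, $L$ is ample on every fiber $\mathbb{P}^{r-1}$ of $\pi$, and these fibers coincide with the fibers of $p$. These inputs suffice to run the bigness part of the argument of Theorem 12, producing $K_E+mL|_E$ big on $E$ for $m\gg 0$; in particular $(K_E+mL|_E)^{n-1}>0$, contradicting the previous paragraph. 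Hence $K_X+L$ is semiample.

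The main obstacle in carrying out this plan is the last application: the statement of Theorem 12 assumes $K+L$ nef on the source, which is not clear here — the nefness of $(K_X+L)|_E$ gives only that $L|_E-(r-1)\xi$ is nef, one shy of the nefness $L|_E-r\xi=K_E+L|_E$ that the statement would require. One therefore has to inspect the proof of Theorem 12 and verify that the nefness of $K+L$ enters only through Remark 10, used to secure relative ampleness of the nef divisor; since we already have $p$-ampleness of $L|_E$ from Remark 10 applied to $\pi$ (rather than to $p$), the remainder of the bigness argument goes through unchanged. The pseudoeffectivity of $F$ does not appear to be needed for this particular route.
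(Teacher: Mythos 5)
Your setup is the same as the paper's: restrict to the exceptional divisor $E=\mathbb{P}(F)\xrightarrow{p}Z$, note $-K_E=r\xi$ is $p$-ample and $L|_E$ is $p$-ample by Remark 10, and invoke the bigness part of Theorem 12 to get $K_E+mL|_E$ big for $m\gg 0$. (Your side remark that the literal nefness hypothesis $K_E+L|_E$ of Theorem 12 is not available, and that one must check it only enters via Remark 10, is a fair point which applies equally to the paper's own invocation of that result.) But the step where you derive the contradiction is broken: from $(K_E+mL|_E)^{n-1}=0$ you conclude that $K_E+mL|_E$ ``is never big.'' That implication is only valid for \emph{nef} divisors, and $K_E+mL|_E=mL|_E-r\xi$ is not known to be nef -- indeed you yourself point out one cannot assume $K_E+L|_E$ nef. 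A non-nef divisor can perfectly well be big with vanishing (or negative) top self-intersection; concretely, if $-\xi$ were big, then $mL|_E-r\xi$ would be big for every $m\geq 0$ with all the intersection numbers you computed still equal to zero. So no contradiction is reached.

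This is exactly where the pseudoeffectivity of $F$ -- which you declare unnecessary -- is used in the paper. Since $\xi=\mathcal{O}_{\mathbb{P}F}(1)$ is pseudoeffective, the bigness of $K_E+tL|_E$ gives that $tL|_E=(K_E+tL|_E)+r\xi$ is (big)$+$(pseudoeffective), hence big; as $L|_E$ \emph{is} nef, this yields $(L|_E)^{n-1}=E\cdot L^{n-1}>0$, so $K_X\cdot L^{n-1}=(r-1)\,E\cdot L^{n-1}>0$ and Lemma 3 concludes that $K_X+L$ is semiample. In short: transfer the bigness from $K_E+tL|_E$ to the nef divisor $L|_E$ (using $\xi$ pseudoeffective) before taking top self-intersections, rather than taking the top self-intersection of the non-nef divisor $K_E+mL|_E$ directly. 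With that replacement your argument matches the paper's; without it, the proof does not close.
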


\begin{proof}
 Let $E$ denote the exceptional divisor of $\pi$ and let $L \in \Pic X$ be as above. Recall that $\pi|_E: E \rightarrow Z$ can be identified with the natural projection $\mathbb{P}F \rightarrow Z$ under which $E|_E$ identifies with $\mathcal{O}_{\mathbb{P}F}(-1) =: \mathcal{O}_E(-1)$. Let $c$ be the codimension of $Z$ in $Y$. Then 
\begin{center}
$K_E+tL|_E=(K_X+E)|_E+tL|_E$
\end{center}
\begin{center}
$= tL|_E +\mathcal{O}_{E}(-c)$
\end{center}
is nef and big for all $t \gg 0$ by above Proposition. Now $\mathcal{O}_E(1)$ being pseudoeffective forces $L|_E$ to be nef and big. Then if $n= \dim X$, \begin{center}
$E\cdot L^{n-1}=K_X\cdot L^{n-1} >0$
\end{center}
and we are done by Lemma 3.  
\end{proof}

\section{Almost strictly nef divisors}
$L\in \Pic X$ is called \emph{almost strictly nef} (ASN) if there is a birational morphism $ \pi: X \xrightarrow[]{}  Y $ to some projective variety $Y$ and $ M \in \Pic Y$ strictly nef such that $\pi^*M = L$. We have the following interesting property of moving ASN divisors:
\begin{prop} Let $X$ be a projective variety of dimension $n$ and let $L \in \Pic X$ be almost strictly nef (ASN) with $\kappa(L) \geq n-2$. Then $L$ is big.

\begin{proof}
Let the following be a resolution of indeterminacy of the Iitaka fibration $\phi=\phi_{|mL|}$ of $L$:
\begin{center}
\begin{tikzcd} 
X \arrow[r, dotted, "\phi"] & Y\\
\hat{X} \arrow[u,"\pi"] \arrow[ur, "\hat{\phi}"]
\end{tikzcd}
\end{center}

So $ \pi^*(mL)=\hat{\phi}^*(A)+E^{'}$ for some $A \in \Pic Y$ ample and $E^{'}$ effective. Let $\hat{L}=\pi^*L$. Let $F \subset \hat{X}$ be a general fiber of $\hat{\phi}$. Then $\dim F \leq 2$. An effective strictly nef divisor on a surface is ample, so we may assume $n \geq 3$. Then $\hat{L}|_{F}$ is ASN. Consider the exact sequence \begin{center}
$0 \rightarrow m\hat{L}\otimes I_F \rightarrow m\hat{L} \rightarrow m\hat{L}|_F \rightarrow 0$.
\end{center}
If $H^0(m\hat{L}|_F)=0$, then $H^0(m\hat{L}\otimes I_F)=H^0(m\hat{L})$ which can only happen if $F \subset Bs|m\hat{L}|=E^{'}$. This is impossible since $F$ is a general fiber. Thus $m\hat{L}|_F$ is ASN and effective. Since $\dim F \leq 2$, $\hat{L}|_F$ is big. Now $\hat{L}$ being $\hat{\phi}$-big,
\begin{center}
 $\hat{L}+\hat{\phi}^*(A)=(m+1)\hat{L}-E^{'}$
\end{center}
 is big by \cite[lemma 2.5]{CCP}. Thus $\hat{L}$ is also big.

\end{proof}
\end{prop}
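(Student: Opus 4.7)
The plan is to reduce to the fibers of the Iitaka fibration (which have dimension at most $2$ under our Kodaira dimension hypothesis), prove bigness on a general such fiber using the ASN structure on low-dimensional varieties, and then upgrade this to global bigness via a standard relatively-big-plus-pulled-back-ample argument. First, the cases $\dim X \leq 2$ are immediate: on a surface an effective strict nef divisor $M = \sum a_i C_i$ satisfies $M^2 = \sum a_i (M\cdot C_i) > 0$ by strict nefness, and by Nakai--Moishezon it is ample; pulling back by the birational morphism from the ASN structure then makes $L$ big. So I may assume $n \geq 3$, which forces $\dim F \leq 2$ for a general fiber $F$ of the Iitaka fibration of $L$.

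Next, I would resolve the indeterminacy of $\phi = \phi_{|mL|}$: produce a smooth $\hat X$ with a birational $\sigma : \hat X \to X$ and a morphism $\hat\phi : \hat X \to Y'$ to a projective variety $Y'$ of dimension $\kappa(L)$, satisfying $\sigma^*(mL) \sim \hat\phi^* A + E'$ with $A$ ample and $E'$ effective. Set $\hat L := \sigma^* L$; the composition $\pi \circ \sigma$ exhibits $\hat L$ as ASN. The key intermediate claim is that $\hat L|_F$ is big for a general fiber $F$ of $\hat\phi$. Effectivity of $\hat L|_F$ follows from the exact sequence
\[
0 \to \mathcal{O}_{\hat X}(m\hat L)\otimes \mathcal{I}_F \to \mathcal{O}_{\hat X}(m\hat L) \to \mathcal{O}_F(m\hat L|_F) \to 0,
\]
since $H^0(m\hat L|_F)=0$ would place $F$ inside $\mathrm{Bs}\,|m\hat L| \subseteq \mathrm{Supp}(E')$, contradicting the generality of $F$. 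For such $F$, the restriction of $\pi\circ\sigma$ realizes $\hat L|_F$ as an ASN divisor on a variety of dimension at most $2$, so by the first paragraph $\hat L|_F$ is big.

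Finally, bigness on a general fiber means $\hat L$ is $\hat\phi$-big, and then by Lemma 2.5 of \cite{CCP} the sum $\hat L + \hat\phi^* A$ is globally big. Since $\hat L + \hat\phi^* A \sim (m+1)\hat L - E'$ as $\mathbb{Q}$-divisors, we see $(m+1)\hat L$ dominates a big divisor plus an effective one, hence is big itself; pushing forward along $\sigma$ gives $L$ big.

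The main obstacle I expect is justifying that $\hat L|_F$ is ASN for a general $F$: the restriction of a birational morphism to a general fiber need not remain birational onto its image, so one must argue carefully, possibly passing to the normalization of $\tau(F)$ (with $\tau := \pi\circ\sigma$) and verifying that the restriction of the strict nef $M$ on $Y$ remains strict nef there. Once this technicality is resolved, the remaining steps are straightforward bookkeeping around the Iitaka fibration.
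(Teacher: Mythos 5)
Your proposal follows essentially the same route as the paper: resolve the indeterminacy of the Iitaka fibration, use the ideal-sheaf exact sequence to show $\hat L|_F$ is effective on a general fiber of dimension at most $2$, conclude bigness there from the ASN structure, and then promote $\hat\phi$-bigness to global bigness via Lemma 2.5 of \cite{CCP}. The only difference is that you explicitly flag (and the paper does not) the subtlety that the birational morphism underlying the ASN structure need not restrict birationally to a general fiber, which is a legitimate point deserving the extra care you describe.
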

\begin{remark}
Thus Serrano's conjecture holds for all strictly nef line bundles $L \in \Pic X$ with $\kappa(L) \geq \dim X -2$ by Lemma 3.
\end{remark}

\begin{remark}
I do not know if it is possible for an ASN $L \in \Pic X$ to have $0 \leq \kappa(L) <\dim (X)-2$.
\end{remark}

Campana, Chen and Peternell conjecture that if $X$ is a smooth projective variety and $L \in \Pic X$ is almost strictly nef, then $ K_X+tL$ is big for all $t> \dim (X)+1$ (\cite[Conjecture 2.2]{CCP}). We prove this conjecture when $X$ is a surface. First, let us record a result we will be using:

\begin{prop}
 Let $ \phi: X \rightarrow Y$ be a surjective morphism with connected fibers between smooth projective varieties . Let $L$ be a nef line bundle on $X$ whose restriction to a general fiber of $\phi$ is ample. Let $\Delta_Y$ be an effective $\mathbb{Q}$ divisor on $Y$ such that $K_Y+\Delta_Y$ is big and let $\Delta_X=\phi^*(\Delta_Y)$. Then there exist positive integers $a,b$ with $b/a > \dim (X)+1$ such that $a(K_X+\Delta_X)+bL$ is linearly equivalent to a non zero effective (integral) divisor.
 
 \begin{proof}
  The argument given in the proof of \cite[Proposition 4.3]{Ser} works if we replace $K_Y$ in the proof with some positive integral multiple of $K_Y+\Delta_Y$.
 \end{proof}
\end{prop}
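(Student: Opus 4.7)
The plan is to adapt the argument from Theorem~13 (equivalently Proposition~4.3 of \cite{Ser}) by substituting bigness of $K_Y+\Delta_Y$ for bigness of $K_Y$ and tracking the resulting $\phi^*\Delta_Y=\Delta_X$ contribution on $X$. The organizing identity is $a(K_X+\Delta_X)=aK_{X/Y}+a\phi^*(K_Y+\Delta_Y)$: a multiple of $K_{X/Y}$ plus the \emph{same} pulled-back multiple of $K_Y+\Delta_Y$ assembles exactly into $a(K_X+\Delta_X)$, which is the combination we need to exhibit as effective.

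First I would set up the $s$-fold fiber product $\phi^s\colon X^s\to Y$ with projections $\pi_i$ and $T=\bigotimes_{i=1}^s\pi_i^*L$ exactly as in the proof of Theorem~13. For a very ample $G\in\Pic Y$ and $m=\dim Y$, Lemma~3.21 in \cite{DPS} gives that $(\phi_*(\omega_{X/Y}\otimes rL))^{\otimes s}\otimes\omega_Y\otimes G^{\otimes(m+1)}$ is globally generated for all $r>0$. Using ampleness of $L$ on general fibers together with $r\gg 0$, the standard multiplication surjection then shows that $\phi_*(s(\omega_{X/Y}\otimes rL))\otimes\omega_Y\otimes G^{\otimes(m+1)}$ is generically spanned and in particular has a nonzero global section.

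Next, clear denominators to find $N\geq 1$ with $N\Delta_Y$ integral and $N(K_Y+\Delta_Y)$ a big integral divisor. Bigness of $K_Y+\Delta_Y$ provides, for $p\gg 0$, an effective integral divisor $\mathcal{E}\in|pN(K_Y+\Delta_Y)-K_Y-(m+1)G|$. By the projection formula, a nonzero section from the previous step becomes a nonzero section on $X$ of $s\omega_{X/Y}+srL+\phi^*(K_Y+(m+1)G)\sim s\omega_{X/Y}+srL+pN\phi^*(K_Y+\Delta_Y)-\phi^*\mathcal{E}$. The critical move is to choose $s=pN$, which is permissible since $s$ is a free parameter and $p,N$ may be scaled up together. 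With this choice, $s\omega_{X/Y}+pN\phi^*(K_Y+\Delta_Y)=sK_X-s\phi^*K_Y+s\phi^*(K_Y+\Delta_Y)=s(K_X+\Delta_X)$, so the section exhibits $s(K_X+\Delta_X)+srL-\phi^*\mathcal{E}$ as effective, whence $s(K_X+\Delta_X)+srL$ itself is linearly equivalent to a non-zero effective divisor. Setting $a=s$ and $b=sr$ gives $b/a=r$, which we arrange to be strictly greater than $\dim X+1$ by choosing $r$ sufficiently large at the outset.

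The main obstacle is the reconciliation of the two free parameters produced by the Viehweg-type weak positivity machinery: $s$ governs the number of fiber-product factors (and the $\omega_Y^{1-s}$ twist that emerges after substituting $\omega_{X/Y}=\omega_X\otimes\omega_Y^{-1}$), while $pN$ governs how much of $K_Y+\Delta_Y$ one ultimately pulls back. Matching the two via $s=pN$ is the key combinatorial move, and all asymptotic conditions ($r\gg 0$ for the surjection, $p\gg 0$ for the effectivity of $\mathcal{E}$, and $r>\dim X+1$ for the conclusion) can be arranged simultaneously without conflict.
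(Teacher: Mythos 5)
Your proposal is correct and follows essentially the same route as the paper, which simply invokes Serrano's Proposition 4.3 with $K_Y$ replaced by a positive integral multiple of $K_Y+\Delta_Y$: your choice $s=pN$ is exactly the bookkeeping that replacement requires, and the asymptotic conditions on $r$, $p$, $s$ can indeed be arranged compatibly. The one caution is that this Proposition does not assume $\phi$ smooth, so where you quote Lemma 3.21 of \cite{DPS} (which needs smoothness of $\phi$ for $X^s$ to be smooth and for $\omega_{X^s/Y}=\bigotimes_i\pi_i^*\omega_{X/Y}$) one should instead use the weak positivity input of Serrano's original argument, passing to desingularizations of the fiber products.
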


\begin{theorem}
Let $S$ be a smooth projective surface, $L\in \Pic S$ almost strictly nef. Then $K_S+tL$ is big for all $ t > 3$.
\begin{proof}
 If $\kappa(S)=0$, this is \cite[Proposition 2.3]{CCP}. We will treat the other cases:
 \\
 
 \underline{Case 1} : $\kappa(S)>0$: Let $S \xrightarrow{\pi} S'$ be the minimal model for $S$. Note that if $\pi^*(K_{S^{'}})+tL$ is big , then so is $K_S+tL$ and we are done. Otherwise, if $\pi^*(K_{S^{'}})+tL$ is not big, then $(\pi^*K_{S^{'}}+tL)^2 =0$, thus 
\begin{center}
$\pi^*K_{S^{'}}^2 = \pi^*(K_{S^{'}})\cdot L = L^2 =0$,
\end{center}
which by Hodge Index Theorem, implies that $\pi^*K_{S^{'}}=cL$ for some $c>0$. Since $\pi^*K_{S^{'}}$ is semiample, hence $L$ and $\pi^*K_{S^{'}}$ are both big. Thus $K_S= \pi^*K_{S^{'}}+E$ (where E is some effective divisor) is big. Hence $K_S+tL$ is also big.
 \\
  
  \underline{Case 2} : $\kappa(S)= -\infty$: Suppose we have $\pi: S \rightarrow  S_0$ birational, $S_0$ normal projective such that $L=\pi^*M$, where $M \in \Pic (S_0)$ is strictly nef. Note: we may assume that $S_0$ is singular, because if $S_0$ is smooth, then $K_{S_0}+tM$ is ample for all $ t> 3$ by \cite[Proposition 2.1]{Ser} which would imply that $K_S+tL$ is big and we are done. Then $\pi$ can be factorized as $\pi: S \xrightarrow{p} S^{'} \xrightarrow{f} S_0$ such that $S{'}$ is smooth and $f$ does not contract any (-1)- curves. $S^{'}$ can be constructed as follows: 
let $S_1$ be a smooth surface obtained by contracting some (-1) curve (if any) in $Ex(\pi)$. Repeat this process to the induced morphism $\pi_1 : S_1 \xrightarrow{} S_0$. Since the Picard rank drops at each step, the process eventually ends with $f:S^{'} \xrightarrow{} S_0$ as above. Now 
\begin{center}
$K_S+tL=(p)^*(K_{S^{'}}+tL^{'})+$ some effective divisor,
\end{center}
where $L^{'}=f^*(M)$. Hence it is enough to show that $K_{S^{'}}+tL^{'}$ is big. Thus replacing $S$ with $S{'}$ and $L$ with $f^*(M)$, we may assume that $\pi$ does not contract any (-1) curves. Moreover, since $\kappa(S) = -\infty$, $S$ is obtained from a projective bundle over a smooth curve $C$ by a sequence of blow-ups, giving $\phi:S \rightarrow C$ or $S=\mathbb{P}^2$. The Theorem is clear in the latter case.
  \\
  
  With this in mind, we claim that $K_S+tL$ is nef for all $ t\geq 3$. Indeed, if $D \subset S$ is a curve, then 
\begin{center}
$D \equiv \Sigma_{i=1}^m a_i C_i + N, a_i \geq 0$ 
\end{center}
for all $i, N \in \overline{NE}(S)$ with $(N \cdot K_S) \geq 0$ and $C_i \subset S$ extremal rational curves with $0>(K_S \cdot C_i) \geq -3$ for all $i$. Then by adjunction, $C_i^2 = -1, 0$ or $1$  for all $i$ (This follows from classification of extremal contractions on surfaces, see \cite[Theorem 1-4-8]{Mat}). Since $ \pi:S \rightarrow  S_0$ does not contract any such curves and $L=\pi^*(M)$, $M$ strictly nef, thus $(L\cdot C_i) \geq 1$ for all $i$ and thus $(K_S+tL\cdot C_i) \geq 0$ for all $ t \geq 3$. Therefore, $(K_S+tL\cdot D) \geq 0$ for all $ t \geq 3$, which proves the claim.
  \\
  
  Thus if $K_S+tL$ is not big for some $t>3$, then $(K_S+tL)^2= (K_S+3L+(t-3)L)^2=0$. Since $K_S+3L$ and $L$ are both nef, this means  $(K_S+3L)^2=L^2=0$ , so \begin{equation} \label{3} K_S^2=K_S\cdot L=L^2 =0 \end{equation}\\
  By Hodge Index Theorem, \begin{equation} \label{4} -K_S= L^{\otimes m} \end{equation} for some $0<m \leq 3.$\\
  
  Thus $K_S+tL$ is almost strictly nef  for all $ t>3$. If $t \gg 0$, then $K_S+tL$ is very ample on the fibers of $\phi : S \rightarrow C$. Since $C$ is a smooth curve, $\phi_*(\mathcal{O}_S(K_S+tL))$ being torsion free, is a bundle of rank $>1$. The remainder of the proof will be divided into three cases: \\
  
    \underline{Case 2.a)}: $g(C)>1$: Then by Proposition 18, (setting $\Delta_X= \Delta_Y=0$) 
    $h^0(r(K_S+tL)) \geq 1$ for some $r>0, t>3$ and $r(K_S+tL)$ is almost strictly nef. $(r(K_S+tL))^2=0$ iff $r(K_S+tL)$ is a curve class contracted by $\pi$. Now since birational morphisms between surfaces can only contract curves of negative self-intersection, $(K_S+tL)^2>0$ for all $ t \gg 0$, so $K_S+tL$ is big for all $ t \gg 0$.
    \\
    
    \underline{Case 2.b)}: $g(C)=0$: i.e. $C \cong \mathbb{P}^1$. Then it follows from \cite[Lemma 4.2]{Ser} that 
\begin{center}
$\phi_*((\omega_{S/\mathbb{P}^1} \otimes L^{\otimes N})^{\otimes s})$ 
\end{center}
is generically spanned for all $ s>0$, i.e. 
\begin{center}
$(\phi_*(\omega_S \otimes L^{\otimes N}))\otimes \mathcal{O}_{\mathbb{P}^1}(-2)$ 
\end{center}
and hence $\phi_*((N-m)L)$ is generically spanned. If $N \gg m$, then $\phi_*((N-m)L)$ has rank $>1$ and hence $h^0 >1$ and $L$ is almost strictly nef. Then $L$ is big as above and we done by (\ref{3}).\\
   
 \underline{Case 2.c)}: $g(C)=1$. In this case, C is an elliptic curve and we can argue as in the proof of \cite[Theorem 3.1] {CCP} (the proof given there actually uses only nefness of $L,K_X+tL$ and relative bigness of $K_X+tL$) to show that $h^0(S, a(K_S+tL)\otimes \phi^*P) \neq 0$ for some $a>0$ and $P \in \Pic ^0(C)$. Thus $a(K_S+tL) \equiv D \geq 0$ and it is also nef. Now $D$ is not big iff $D^2=0$ iff $D$ is numerically equivalent to a $\pi$-exceptional curve which can not be since $D$ is nef (recall $-K_S$ and $L$ were positively proportional). Thus $D^2>0$ and we are done as before. This finishes the proof.
\end{proof}
\end{theorem}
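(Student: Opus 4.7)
The plan is to stratify the argument by the Kodaira dimension $\kappa(S)$, since both the structure of $S$ and the behavior of $K_S$ change drastically across the trichotomy. The case $\kappa(S)=0$ is Proposition 2.3 of \cite{CCP}, so I will concentrate on $\kappa(S)\geq 1$ and $\kappa(S)=-\infty$.

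For $\kappa(S)\geq 1$, I would pass to the minimal model $\pi:S\to S'$ and study $\pi^{*}K_{S'}+tL$. If this divisor is big, then so is $K_S+tL$, because $K_S=\pi^{*}K_{S'}+E$ with $E$ an effective exceptional divisor. Otherwise $(\pi^{*}K_{S'}+tL)^{2}=0$; since both summands are nef, this forces the three intersection numbers $(\pi^{*}K_{S'})^{2}$, $\pi^{*}K_{S'}\cdot L$, $L^{2}$ all to vanish, and the Hodge Index Theorem yields $\pi^{*}K_{S'}\equiv cL$ with $c>0$. Because $K_{S'}$ is semiample on the minimal model and has positive Kodaira dimension, $\pi^{*}K_{S'}$ is big, hence so is $L$, contradicting $L^{2}=0$. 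Thus $K_S+tL$ is big for $t>3$.

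The case $\kappa(S)=-\infty$ is the most delicate, and I would attack it in stages. First reduce to the case where the given $\pi:S\to S_{0}$ contracts no $(-1)$-curves: successively blow down $(-1)$-curves in the exceptional locus to a smooth intermediate model $S_{1}$, observe that $K_S+tL$ pulls back from the corresponding divisor on $S_{1}$ plus an effective exceptional class, and iterate (the Picard rank drops each time). After this reduction, $\kappa(S)=-\infty$ leaves either $S=\mathbb{P}^{2}$ (trivial) or a ruling $\phi:S\to C$. Next, I claim $K_S+tL$ is nef for $t\geq 3$: decomposing any irreducible curve via the Cone Theorem into extremal rational curves $C_{i}$ (each satisfying $-K_S\cdot C_{i}\in\{1,2,3\}$ and $C_{i}^{2}\in\{-1,0,1\}$ by the classification of extremal contractions on surfaces) plus a $K_S$-nonnegative class, none of these $C_{i}$ are contracted by the reduced $\pi$, so $L\cdot C_{i}\geq 1$, giving $(K_S+tL)\cdot C_{i}\geq 0$. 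If $K_S+tL$ fails to be big for some such $t$, Hodge Index as before forces $-K_S\equiv mL$ with $1\leq m\leq 3$, so $K_S+tL$ is itself almost strictly nef.

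To close the argument I plan to produce a nonzero section of some multiple $a(K_S+tL)$: the resulting effective class is nef and ASN, and since a birational morphism between surfaces only contracts curves of negative self-intersection, its self-intersection must be strictly positive, contradicting $(K_S+tL)^{2}=0$. Producing the section splits by $g(C)$. For $g(C)>1$, I would apply Proposition 18 with $\Delta=0$, using that $K_C$ is big. For $g(C)=0$, I would use Lemma 4.2 of \cite{Ser} and generic spanning of the pushforward $\phi_{*}$, combined with $-K_S\equiv mL$ to extract positivity. The main obstacle is $g(C)=1$: here $K_C=0$ kills Proposition 18 and the $\mathbb{P}^{1}$ argument, so I plan to adapt the proof of Theorem 3.1 in \cite{CCP}, whose input is precisely nefness of $L$ and $K_S+tL$ together with relative bigness; it produces a nonzero section of $a(K_S+tL)\otimes\phi^{*}P$ for some $P\in\Pic^{0}(C)$, and the same non-contraction argument closes the loop.
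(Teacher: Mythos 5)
Your proposal follows the paper's proof essentially step for step: the same trichotomy on $\kappa(S)$, the same reduction in Case 1 to the minimal model and the Hodge Index Theorem, the same reduction in Case 2 to a map $\pi\colon S\to S_0$ contracting no $(-1)$-curves, the same nefness claim for $K_S+tL$ ($t\geq 3$) via the Cone Theorem and the classification of surface extremal rays, the same derivation of $-K_S\equiv mL$ with $1\leq m\leq 3$, and the same three-way split on $g(C)$ using Proposition 18, Lemma 4.2 of \cite{Ser}, and the argument of Theorem 3.1 of \cite{CCP} respectively, closed off by the observation that a nef effective class cannot be $\pi$-exceptional.

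The one place where your stated justification does not hold as written is in Case 1: you assert that because $K_{S'}$ is semiample with positive Kodaira dimension, $\pi^{*}K_{S'}$ is big. This is false when $\kappa(S)=1$, where $K_{S'}$ is semiample with $K_{S'}^{2}=0$ (an elliptic fibration), so positivity of the Kodaira dimension alone does not give bigness. The conclusion is still correct, but the repair must use the almost-strict-nefness of $L$, which your argument never invokes at this point: from $\pi^{*}K_{S'}\equiv cL$ one gets that $L$ is (a rational multiple of) a semiample divisor, hence $\kappa(L)\geq 0$, and an ASN divisor on a surface with $\kappa(L)\geq 0$ is big (Proposition 17 of the paper; equivalently, the curves contracted by the Iitaka fibration of $L$ have self-intersection zero, so they survive under the birational map to $S_0$ and would violate strict nefness of $M$ there). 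This yields $L^{2}>0$, contradicting the Hodge-index setup, exactly as you want. The paper is admittedly just as terse at this step, but your version replaces the needed argument with an implication that is simply not true, so you should patch it.
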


\begin{corollary}
Let $S$ be a Gorenstein surface and $L \in \Pic S $ almost strictly nef (ASN). Then $K_S+tL$ is big for all $ t>3$.

\begin{proof}
We follow the arguments of \cite[Theorem 2.3]{Ser}. Consider 
\begin{center}
$\pi: R\xrightarrow{f} T \xrightarrow{j} S$
\end{center}
where $T$ is the normalization of $S$ and $R$ is a desingularization of $T$. Then by Theorem 20, $K_R+t\pi^*L$ is big for all $ t \geq 4$. We use the following result of Kawamata:

\begin{prop}(See \cite[Proposition 7]{Kawa}) Let $X$ and $Y$ be Gorenstein varieties, $j:Y^{'} \rightarrow Y$ the normalization of $Y$ and let $f:X \rightarrow Y^{'}$ be a proper birational morphism. Then for all $ m \in \mathbb{N}$, there exists a natural injective homomorphism $f_*(\omega_X^{\otimes m})\hookrightarrow j^*(\omega_Y^{\otimes m})$.
\end{prop}
This gives us an injection 
\begin{center}
$f_*(r(K_R+m\pi^*L))=f_*(rK_R) \otimes j^*(rmL) \hookrightarrow j^*(rK_S+mrL)$.
\end{center}
Let $m \geq 4$. Then 
\begin{center}
$h^0(r(K_R+m \pi^*L)) \sim r^2$
\end{center}
for all $ r \gg 0$ and thus \begin{center}
$h^0(j^*(rK_S+mrL)) \sim r^2$
\end{center}
 if $r \gg 0 $. Now there exists an exact sequence 
\begin{center}
$0\rightarrow \mathcal{O}_S \rightarrow j_*\mathcal{O}_T \rightarrow \mathcal{F} \rightarrow 0$
\end{center}
where $\mathcal{F}$ is an $\mathcal{O}_S$-module supported on a closed subvariety. Tensoring with $\mathcal{O}_S(r(K_S+mL))$ and taking $H^0$, this gives: 
\begin{center}
$0\rightarrow H^0( \mathcal{O}_S(r(K_S+mL))) \rightarrow H^0(j_*j^*(r(K_S+mL))) \rightarrow H^0(\mathcal{F} \otimes \mathcal{O}_S(r(K_S+mL))) \rightarrow $.
\end{center}
The middle term $\sim r^2$ and the right term $\lesssim r$ for all $ r \gg 0$. Thus 
\begin{center}
$h^0(r(K_S+mL)) \sim r^2$
\end{center}
for all $ r \gg 0$.

\end{proof}
\end{corollary}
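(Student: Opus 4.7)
The plan is to reduce to the smooth case already handled in Theorem 19, by passing through the normalization and a subsequent desingularization, and then to transfer pluricanonical sections back to $S$ using the Kawamata-type inclusion of Proposition 20.

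First I would let $j\colon T \to S$ be the normalization and $f\colon R \to T$ a resolution of singularities of $T$, set $\pi := j\circ f$, and check that $\pi^*L$ is almost strictly nef on the smooth surface $R$: if $g\colon S\to Y$ witnesses the ASN structure of $L$ (with $M \in \Pic Y$ strictly nef and $L = g^*M$), then $g\circ\pi\colon R\to Y$ is birational and $\pi^*L = (g\circ\pi)^*M$. Theorem 19 applied to $(R,\pi^*L)$ then yields that $K_R+t\pi^*L$ is big for $t>3$, so that for any fixed integer $m\geq 4$ one has $h^0\bigl(R,\,r(K_R+m\pi^*L)\bigr) \sim r^2$ as $r\to\infty$.

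Next, I would push these sections down to $S$. Since $S$ is Gorenstein and $R$ is smooth (hence Gorenstein), Proposition 20 supplies a natural injection $f_*(\omega_R^{\otimes r})\hookrightarrow j^*(\omega_S^{\otimes r})$. Tensoring with $j^*L^{\otimes rm}$ and applying the projection formula (using that $\pi^*L = f^*j^*L$ and that $L$ is Cartier on $S$) gives
\[
f_*\bigl(\mathcal{O}_R(r(K_R+m\pi^*L))\bigr) \;\hookrightarrow\; j^*\bigl(\mathcal{O}_S(r(K_S+mL))\bigr),
\]
and taking $H^0$ yields $h^0\bigl(T,\,j^*(r(K_S+mL))\bigr) \gtrsim r^2$. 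Finally, to transfer the estimate from $T$ back to $S$, I would use the exact sequence $0 \to \mathcal{O}_S \to j_*\mathcal{O}_T \to \mathcal{F} \to 0$. Because $S$ is Gorenstein (hence $S_2$), the sheaf $\mathcal{F}$ is supported on the non-normal locus, which has codimension at least one, so $h^0(\mathcal{F}\otimes\mathcal{O}_S(r(K_S+mL)))$ grows at most linearly in $r$. Tensoring with the Cartier bundle $\mathcal{O}_S(r(K_S+mL))$ and taking cohomology then gives $h^0(S,\,r(K_S+mL)) \geq h^0(T,\,j^*(r(K_S+mL))) - O(r) \sim r^2$, proving bigness.

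The main obstacle I expect is bookkeeping rather than a single hard step: one must verify that Kawamata's injection remains an injection after tensoring with the pullback of the Cartier bundle $L$ (this is fine via the projection formula) and that the quotient sheaf $\mathcal{F}$ is genuinely supported in codimension $\geq 1$, so that its contribution to $h^0$ is absorbed by the dominant $r^2$ term coming from $R$. The condition $t\geq 4$ matches the threshold $t>3$ in the smooth Theorem 19, so no loss is incurred in the reduction.
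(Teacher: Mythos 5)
Your proposal follows essentially the same route as the paper's proof: reduce to the smooth model $R$ obtained by normalizing $S$ and resolving, apply Theorem 19 there, transfer pluricanonical sections back via Kawamata's injection $f_*(\omega_R^{\otimes r})\hookrightarrow j^*(\omega_S^{\otimes r})$ tensored with $j^*L^{\otimes rm}$, and then absorb the error term from $0\to\mathcal{O}_S\to j_*\mathcal{O}_T\to\mathcal{F}\to 0$ using that $\mathcal{F}$ is supported in dimension at most one. The argument is correct and coincides with the paper's in all substantive steps.
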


\begin{corollary} Let $X$ be a smooth projective variety of dimension $n$ and let $L \in \Pic X$ be almost strictly nef (ASN) with $\kappa (aK_X+bL) \geq n-2$ for some $a \geq 0$. Then $K_X+tL$ is big  for all $ t \gg 0$.
\begin{proof}
The proof is quite similar in spirit to that of Proposition 17 and \cite[Theorem 2.6]{CCP}, so the details are left out. Letting $M=aK_X+bL$, we consider a smooth resolution of indeterminacy $\pi: \hat{X} \rightarrow X $ of the Iitaka fibration $\phi = \phi _{|mM|}$. Let $\hat {\phi}: \hat{X} \rightarrow Y$ be the composite. Let $F \subset \hat{X}$ be a general fiber of $\hat{\phi}$ and $ \hat{L}=\pi^*L$. Suppose $K_{\hat{X}}=\pi^*K_X+E^{'} $ and 
\begin{center}
$\pi^*(mM)=\hat{\phi}^*(A)+E$,
\end{center}
$ A \in \Pic Y$ ample and $E, E^{'} \geq 0$. Then we can show that $K_{\hat{X}}+t\hat{L}$ is $\hat{\phi}$-big by Theorem 19 and pseudoeffective, thus $K_{\hat{X}}+t\hat{L}+\hat{\phi}^*(A)$ is big for all $t \gg 0$ (see \cite[Lemma 2.5]{CCP}) and this will show that $K_{\hat{X}}+t\hat{L}$ is big for all $t \gg 0$. Hence, so is $K_X+tL= \pi_*(K_{\hat{X}}+t \hat{L})$.
\end{proof}
\end{corollary}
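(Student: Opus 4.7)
The plan is to mirror the proof of Proposition 17, with $M := aK_X + bL$ playing the role of $L$ there, and to use Theorem 19 on a general fiber of the Iitaka fibration of $M$. When $a=0$, the hypothesis gives $\kappa(L) \geq n-2$ (assuming $b>0$), so Proposition 17 already yields $L$ big, and hence $K_X + tL$ big for $t \gg 0$; so I assume $a \geq 1$ from here on.

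For $m \gg 0$ I would resolve the indeterminacy of $\phi_{|mM|}$ by a birational morphism $\pi : \hat{X} \to X$ from a smooth $\hat{X}$, obtaining $\hat{\phi} : \hat{X} \to Y$ with $\dim Y = \kappa(M) \geq n-2$, a decomposition $\pi^*(mM) = \hat{\phi}^*A + E$ with $A$ ample on $Y$ and $E \geq 0$, and $K_{\hat{X}} = \pi^* K_X + E'$, $E' \geq 0$ exceptional; set $\hat{L} = \pi^* L$. The key step is to prove $K_{\hat{X}} + t\hat{L}$ is $\hat{\phi}$-big for $t \gg 0$. A general fiber $F$ of $\hat{\phi}$ is smooth with trivial normal bundle and has $\dim F \leq 2$, so it suffices to show $K_F + t\hat{L}|_F$ is big. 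Writing $L = \pi_X^* M_0$ from the ASN hypothesis, for general $F$ the composite $(\pi_X \circ \pi)|_F : F \to X_0$ is birational onto its image (since $F$ generically avoids the exceptional loci of $\pi$ and $\pi_X$), so $\hat{L}|_F$ is itself ASN on $F$. Theorem 19 then handles the case $\dim F = 2$; the curve case reduces to $\deg(\hat{L}|_F) > 0$, which holds because strict nefness restricts and pulls back to positive degree on curves; and the point case ($\dim Y = n$) just says $M$ is big, from which $K_X + tL = \frac{1}{a}M + (t - \frac{b}{a})L$ is big directly.

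For pseudoeffectivity, since $\kappa(M) \geq 0$, $M$ is $\mathbb{Q}$-effective, so $K_X + tL = \frac{1}{a}M + (t - \frac{b}{a})L$ is pseudoeffective for $t \geq b/a$ as a sum of a $\mathbb{Q}$-effective and a nef class, and hence so is $K_{\hat{X}} + t\hat{L}$. Lemma 2.5 of \cite{CCP} then gives that $K_{\hat{X}} + t\hat{L} + \hat{\phi}^*A$ is big for $t \gg 0$; substituting $\hat{\phi}^*A = \pi^*(mM) - E$ and rewriting in terms of $K_{\hat{X}}$ and $\hat{L}$, one gets $(1 + ma)K_{\hat{X}} + (t + mb)\hat{L}$ big modulo effective exceptional divisors, so $K_{\hat{X}} + t\hat{L}$ is big for $t$ large (bigness is preserved by adding multiples of the nef class $\hat{L}$). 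Finally, bigness of $K_{\hat{X}} + t\hat{L} = \pi^*(K_X + tL) + E'$ descends to $K_X + tL$ via $\pi_*$, since $E'$ is $\pi$-exceptional. The main obstacle is the ASN-on-fiber verification: ensuring that $(\pi_X \circ \pi)|_F$ is birational onto its image for general $F$, so that $\hat{L}|_F$ genuinely falls within the scope of Theorem 19; this rests on the fact that the exceptional loci of $\pi$ and $\pi_X$ are proper closed, so a general fiber of the Iitaka fibration of $M$ is not contracted.
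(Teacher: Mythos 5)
Your proposal follows essentially the same route as the paper: resolve the indeterminacy of the Iitaka fibration of $M=aK_X+bL$, establish $\hat{\phi}$-bigness of $K_{\hat X}+t\hat L$ via Theorem 19 on a general fiber together with pseudoeffectivity, apply Lemma 2.5 of \cite{CCP}, and descend bigness through the exceptional divisor. The paper deliberately omits the details; your verifications (the $a=0$ case, the ASN restriction to a general fiber, and the low-dimensional fiber cases) correctly fill in exactly the steps the paper leaves implicit.
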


\section{Examples of strictly nef non-ample divisors}
In this section, we prove Theorem 5. Our calcultions are in the spirit of \cite[example 3.3]{LR}.\\

First, we need to state the following conjecture (see \cite[Conjecture 3.1]{Han} and the references listed there):

\begin{conjecture}
(SHGH) Fix integers $d,m_1, \dots m_r \geq 0$, $r>0$. Let $X=Bl_{p_1, \dots p_r}(\mathbb{P}^2) \xrightarrow{\pi} \mathbb{P}^2$ be a blow-up of $\mathbb{P}^2$ at $r$ general points and $E_1, \dots , E_r$ the exceptional curves. Suppose the linear system $L:=|d\pi^* \mathcal{O}_{\mathbb{P}^2}(1)-m_1E_1- \dots -m_rE_r|$ contains a reduced curve. Then $\dim L = \binom {d+2} {2} -1-\Sigma _i \binom{m_i+1}{2}$ (The quantity on the right hand side is often called the expected dimension of $L$).
\end{conjecture}

Now fix any integer $d \geq 4$. Let $p_1,\dots,p_{d^2} \in \mathbb{P}^2$ be general points and $X=Bl_{p_1,\dots,p_{d^2}} \mathbb{P}^2 \xrightarrow{\pi} \mathbb{P}^2$
 be the blow-up with exceptional divisors $E_1,\dots,E_{d^2}$. Let $l=\pi^*(\mathcal{O}_{\mathbb{P}^2}(1))$
be the pulled back hyperplane class. Let 
\begin{center}
 $L=dl-E_1- \dots -E_{d^2}$.
\end{center}
Then clearly $L^2=0$ and $L\cdot E_i=1$ for all $ i$. Now $d\geq 4$ and the generality of the $p_i$ ensures that $\kappa(L) = -\infty$. Let $C= ml-\Sigma _{i=1}^{d^2} r_i E_i$ be the proper transform of a curve of degree m having singularities of orders $r_i-1$ at $p_i$. Note that $(m,r_1,\dots,r_{d^2})$ are the coordinates of $C$ on $\Pic (X) \otimes \mathbb{R} \cong \mathbb{R}^{d^2+1}$. Since for testing strict nefness of $L$ we may assume that $C$ is reduced, the SHGH conjecture implies that the expected dimension of the linear system $|mL- \Sigma _{i=1}^{d^2}r_iE_i|$  which is 
\begin{center}
$\binom{m+2}{2}-1-\Sigma _1 ^{d^2} \binom{r_i+1}{2} $
\end{center}
is nonnegative which simplifies to 
\begin{center}
$\rho(m)^2 :=m^2+3m-d^2/4 \geq \Sigma _1 ^{d^2}(r_i+1/2)^2$.
\end{center}
 For $m$ fixed, this is the equation of a sphere with center 
\begin{center}
$A=(-1/2,\dots , -1/2) \in \mathbb{R}^{d^2}$
\end{center}
 and radius 
\begin{center}
$\rho (m)= \sqrt{m^2+3m-d^2/4}$
\end{center}
in $\mathbb{R}^{d^2}$. Now $L\cdot C=dm-\Sigma_1^{d^2} r_i$. For $m$ fixed, consider the hyperplane 
\begin{center}
$H=(r_1+\dots+r_{d^2}-dm=0) \subset \mathbb{R}^{d^2}$. \end{center}
Notice that $A \in H_{<0}$ and 
\begin{center}
$AH= d/2+m =: \beta (m)$.
\end{center}
 In fact, the whole sphere above is contained in $H_{<0}$ because 
\begin{center}
$\rho(m)^2=m^2+3m-d^2/4 \leq \beta (m)^2=m^2+md+d^2/4 $
\end{center} for all $d \geq 4$ and thus $L\cdot C>0$.\\

Now let $S$ be a projective surface. It admits a finite morphism $f: S \xrightarrow{} \mathbb{P}^2$ which, suppose, is of degree $m$. Since the points $p_1,\dots,p_{d^2} \in \mathbb{P}^2$ are general, we may assume that they are not in the branch locus of $f$. Let $ \{q_1,\dots,q_{md^2}\}=f^{-1}\{p_1,\dots,p_{d^2}\}$. Then $f$ extends to a finite morphism $f^{'}:S^{'}:=Bl_{q_1,\dots,q_{md^2}}(S) \rightarrow X$ which is the base change of $f$ via $\pi$. Now $f^{'*}(L) \in \Pic (S^{'})$ is strictly nef and non-ample. 

\section{Acknowledgements}
I thank my advisor Patrick Brosnan for numerous conversations and remarks which helped improve this paper in many ways. Thanks also to Caucher Birkar for telling me about the paper \cite{LP} of Lazić and Peternell and to Krishna Hanumanthu for alerting me about a gap in a previous version of Section 4 and for his comments on the SHGH conjecture.
 %%%%%%%%%%%%%%%%%%%%%%%%%%%%%%%%%%%%%%%%%%%%%%%%%%
 
\end{document}